\documentclass[a4paper, oneside, reqno, 12pt]{amsart}
\usepackage[english]{babel}
\usepackage[utf8]{inputenc}
\usepackage{amsmath, amssymb, amsfonts}
\usepackage{mathtools}
\usepackage{fullpage}
\usepackage{comment}
\usepackage{ textcomp, cmap}
% \usepackage[mathlines]{lineno}
% \linenumbers

\usepackage[usenames,dvipsnames]{xcolor}

\usepackage[backref=page]{hyperref}
\hypersetup{
	colorlinks   = true, %Colours links instead of ugly boxes
	urlcolor     = blue, %Colour for external hyperlinks
	linkcolor    = Blue, %Colour of internal links
	citecolor   = Green %Colour of citations
}
\usepackage[capitalise, nameinlink, noabbrev]{cleveref}
\usepackage{autonum}
\usepackage{caption}
\usepackage[font = small]{subcaption}
%\captionsetup[subfigure]{labelfont=rm}
\usepackage{pgf, tikz}
\usetikzlibrary{arrows}
\usepackage{float} 
\usepackage{amsthm} 
\theoremstyle{plain}
\newtheorem{theorem}{Theorem}

\newtheorem{lemma}[theorem]{Lemma}
\newtheorem{corollary}[theorem]{Corollary}
\theoremstyle{definition}

\newtheorem{definition}{Definition}

\theoremstyle{remark}
\newtheorem{remark}{Remark}

\definecolor{ffqqqq}{rgb}{1.,0.,0.}

\title[Alon--Boppana-type bound]{{Alon--Boppana-type bounds for weighted graphs}}
\author[Alexandr Polyanskii, Rinat Sadykov]{{Alexandr~Polyanskii and Rinat~Sadykov}}

\address{Alexandr Polyanskii,
\newline\hphantom{iii} Institute of Mathematics and Informatics, Bulgarian Academy of Sciences Bulgaria, Sofia 1113, Acad. G. Bonchev Str., Bl. 8
\newline\hphantom{iii} Moscow Institute of Physics and Technology, Institutskiy per. 9, Dolgoprudny, Russia 141700
}
\email{\href{mailto:alexander.polyanskii@gmail.com}{alexander.polyanskii@gmail.com}}
\urladdr{\url{http://polyanskii.com}}

\address{Rinat Sadykov,
\newline\hphantom{iii} Moscow Institute of Physics and Technology, Institutskiy per. 9, Dolgoprudny, Russia 141700
}
\keywords{}
\subjclass[2010]{05C22}
\thanks{The research of A.P. was supported by the Ministry of Education and Science of Bulgaria, Scientific 
Programme "Enhancing the Research Capacity in Mathematical Sciences (PIKOM)", 
No. DO1-67/05.05.2022. A.P. is a Young Russian Mathematics award winner and would like to thank its sponsors and jury.}

\begin{document}

\thispagestyle{empty}

\begin{abstract}
The \textit{unraveled ball} of radius $r$ centered at a vertex $v$ in a weighted graph $G$ is the ball of radius $r$ centered at $v$ in the universal cover of $G$. We present a general bound on the maximum spectral radius of unraveled balls of fixed radius in a weighted graph.

The weighted degree of a vertex in a weighted graph is the sum of weights of edges incident to the vertex. A weighted graph is called \textit{regular} if the weighted degrees of its vertices are the same. Using the result on unraveled balls, we prove a variation of the Alon--Boppana theorem for regular weighted graphs.
\end{abstract}

\maketitle

\section{Introduction}
In 1993, Freidmen~\cite{friedman1993geometric} refined the celebrated Alon--Boppana theorem~\cite{AB}. He proved that for every $d$-regular graph $G$ with diameter $2r$, the second largest eigenvalue of adjacency matrix of $G$, denoted by $\lambda_2(G)$, satisfies
\[
\lambda_2(G)\geq 2 \left( 1 - \frac{\pi^2}{r^2}+O(r^4)\right) \sqrt{d-1}.
\]

In 2005, Hoory~\cite[Theorem 1]{hoory2005lower} studied the spectral radius of the universal cover of a non-regular graph. As a corollary he proved a variation of the Alon--Boppana theorem for graphs with $r$-robust average degree at least $d$, which was later improved by Jiang~\cite{jiang2019spectral}; see also~\cite[Lemma~19]{jiang2020forbidden}. We say that a graph has \textit{$r$-robust average degree} at least $d$ if the average degree of a graph is at least $d$ after deleting any ball of radius $r$.

\begin{theorem}{\cite[Theorem 8]{jiang2019spectral}}
\label{theorem jiang}
Let $d \geq 1$ be a real number and let $r$ be a positive integer. If a graph $G$ has an $r$-robust average degree at least $d$, then 
    \[
    \frac{\lambda_2(G)}{\lambda_1(P_r)}\geq \sqrt{d-1}.
    \]
\end{theorem}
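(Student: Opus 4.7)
The plan is to bound $\lambda_2(G)$ from below via the Courant--Fischer min-max principle: I will produce two non-negative functions $F_1,F_2$ on $V(G)$ with disjoint supports whose Rayleigh quotients under $A_G$ are at least $\sqrt{d-1}\,\lambda_1(P_r)$, and then take the element of $\mathrm{span}(F_1,F_2)$ orthogonal to the positive Perron eigenvector $\phi$ of $A_G$.

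To build these test vectors I invoke the bound on the spectral radius of unraveled balls implicit in~\cite{hoory2005lower}: in any graph of average degree at least $d$, some vertex has an unraveled ball of radius $r$ with spectral radius at least $\sqrt{d-1}\,\lambda_1(P_r)$. Applied to $G$ (whose average degree is at least $d$ since $G$ is $r$-robust), this gives a vertex $u_1\in V(G)$ and a Perron eigenvector $f_1\ge 0$ of the unraveled ball $T_1\subseteq \widetilde{G}$ with the required eigenvalue. By $r$-robustness, the subgraph $G' := G\setminus B_r(u_1)$ still has average degree at least $d$, so applying the same bound inside $G'$ yields a second vertex $u_2\in V(G')$ (hence $\dist_G(u_1,u_2)>r$) and a Perron eigenvector $f_2\ge 0$ of an unraveled ball $T_2\subseteq \widetilde{G'}$ with the required eigenvalue.

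I then push $f_i$ down through the covering maps $p_1\colon \widetilde{G}\to G$ and $p_2\colon \widetilde{G'}\to G'\hookrightarrow G$ by setting $F_i(x):=\sum_{\tilde y\in p_i^{-1}(x)\cap V(T_i)} f_i(\tilde y)$. Since each $p_i$ is a local isomorphism and $A_G$ dominates $A_{G'}$ entrywise on $V(G')$, a pointwise check gives $(A_G F_i)(x)\ge \lambda_1(T_i)\,F_i(x)$, so each $F_i$ has Rayleigh quotient at least $\sqrt{d-1}\,\lambda_1(P_r)$ under $A_G$. By construction $\mathrm{supp}(F_1)\subseteq B_r(u_1)$ and $\mathrm{supp}(F_2)\subseteq V(G)\setminus B_r(u_1)$, so $\langle F_1, F_2\rangle=0$; choosing $\alpha,\beta$ so that $F:=\alpha F_1+\beta F_2$ is orthogonal to $\phi$, Courant--Fischer yields $\lambda_2(G)\ge \langle F, A_G F\rangle/\|F\|^2$.

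The main obstacle is the cross term $\langle F_1,A_G F_2\rangle$. Since $\langle F_i,\phi\rangle>0$, orthogonality to $\phi$ forces $\alpha\beta<0$; the cross term is nonnegative (both $F_i\ge 0$ and $A_G$ is nonnegative) and therefore contributes with a negative sign in the numerator, so a naive expansion gives only $\lambda_2(G)\ge \sqrt{d-1}\,\lambda_1(P_r)-\delta$ with $\delta$ proportional to $\langle F_1,A_G F_2\rangle$. To kill this term I would, following the Hoory--Jiang template, replace the Perron eigenfunctions of $T_i$ by the Dirichlet variants (vanishing on the distance-$r$ sphere of $T_i$) after slightly enlarging the radii: every $G$-edge from $B_r(u_1)$ to $V(G)\setminus B_r(u_1)$ has an endpoint on one such distance-$r$ sphere, where $F_i$ then vanishes; consequently $\langle F_1,A_G F_2\rangle=0$ and Courant--Fischer closes the bound.
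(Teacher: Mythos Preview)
The paper does not itself prove this statement; Theorem~\ref{theorem jiang} is quoted from \cite{jiang2019spectral}. What the paper does prove is the weighted-regular analogue, Theorem~\ref{theorem alon boppana for weighted regular graphs}, and its proof in Section~\ref{section alon-boppana} follows the same two-test-vector Courant--Fischer template you outline. Two differences are worth recording. First, rather than pushing a tree eigenvector down through the cover, the paper invokes Lemma~\ref{lemma connection between H and tilde H} to pass to the genuine ball $G(v,r)$ and takes its Perron vector for $f_1$; this sidesteps your pushdown bookkeeping entirely. Second, the paper's $f_2$ is simply the indicator of a subgraph of large minimum weighted degree, which is what its hypothesis supplies; your use of a second unraveled ball is the natural replacement under the average-degree robustness hypothesis of Theorem~\ref{theorem jiang}. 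In the paper the buffer killing the cross term is written into the hypothesis: $f_2$ lives inside $G\setminus G(v,r{+}1)$, one layer beyond $\mathrm{supp}(f_1)\subseteq G(v,r)$, so $\langle f_1, A_G f_2\rangle=0$ automatically.

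Your handling of the cross term is the one place where the proposal is not yet a proof. With $\mathrm{supp}(F_1)\subseteq B_r(u_1)$ and $\mathrm{supp}(F_2)\subseteq V(G)\setminus B_r(u_1)$, edges across $S_r(u_1)$ can contribute, and since $\alpha\beta<0$ this is genuinely harmful. The remedy you sketch is right in spirit but the clause ``after slightly enlarging the radii'' points the wrong way: enlarging the first ball would demand $(r{+}1)$-robustness, which is not assumed. The clean fix is the opposite---shrink. Take $T_i$ of radius $r{-}1$ (equivalently, impose your Dirichlet condition on the distance-$r$ sphere of the radius-$r$ ball, which amounts to the same thing). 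Then $\mathrm{supp}(F_1)\subseteq B_{r-1}(u_1)$ while, with $G'=G\setminus B_r(u_1)$ from $r$-robustness, $\mathrm{supp}(F_2)\subseteq V(G')$; no $G$-edge can join a vertex at distance $\le r{-}1$ from $u_1$ to one at distance $\ge r{+}1$, so $\langle F_1,A_G F_2\rangle=0$. The radius-$(r{-}1)$ unraveled-ball bound produces exactly $\sqrt{d-1}\,\lambda_1(P_r)$, matching the statement.
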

We denote by $P_r$ the path with $r$ vertices and spectral radius $\lambda_1(P_r)=2\cos (\frac{\pi}{r+1})$.

To prove Theorem~\ref{theorem jiang}, Jiang studied the maximum spectral radius of unraveled balls of a graph $G$, which are balls in the universal cover of $G$; see Definition~\ref{definition unraveled ball}. In 2022, Wang and Zhang applied the machinery developed in~\cite{jiang2019spectral} and proved an analog of Theorem~\ref{theorem jiang} for the normalized Laplacian of a graph~\cite[Theorem 1.6]{wang2022weighted}, improving the bounds from~\cite{young2022weighted}; see also~\cite{srivastava2018alon}. To show this result, they studied the maximum spectral radius of unraveled balls of a weighted graph.

Motivated by the works of Jiang and Wang--Zhang, we develop further their ideas and prove a generalization of Wang and Zhang's result on the spectral radius of unraveled balls for a weighted graph; see Theorem~\ref{theorem most general version}. This implies an analog of Theorem~\ref{theorem jiang} for regular weighted graphs, as shown in Theorem~\ref{theorem alon boppana for weighted regular graphs}. 

A \textit{weighted graph} is a graph without parallel edges and loops, in which every edge is assigned to a positive number. Formally, a weighted graph $G$ is a triple $(V(G), E(G), w_G)$, where $V(G)$ and $E(G)$ are the vertex and edge sets of the graph $G$, respectively, and $w_G: E(G) \to \mathbb{R}_+$ is the weight function, with $\mathbb R_+$ being the set of positive real numbers. For sake of brevity, we write $w_{ab}$ and $w_{ba}$ for the weight $w_G(ab)$ of an edge $ab\in E(G)$. The \textit{weighted degree} of a vertex $v$, denoted by $w_v$, is the sum of the weights of the edges incident to $v$, that is, 
\(
w_v = \sum_{vu \in E(G)} w_{vu} .
\)
A weighted graph is called $w$-\textit{regular} if the weighted degree of every vertex equals $w$. Throughout the paper, we regularly write ``\textit{a~weighted graph with minimal degree at least 2}'', which means that each vertex is incident to at least 2 edges (rather than the weighted degree of each vertex is at least 2).

A \textit{non-backtracking walk} of length $n$ in a weighted graph is a sequence of vertices $(v_0,\dots, v_n)$ such that any two consecutive are adjacent and $v_i \neq v_{i+2}$ for all $i\in\{0,\dots,n-2\}$. Denote by $W_i(G)$ the set of non-backtracking walks on a graph $G$ of length $i$.

\begin{definition}
\label{definition unraveled ball}
Given a weighted graph $G$, we define the weighted tree $\tilde{G}(v,r)$ as follows. Its vertex set is the set of all non-backtracking walks of length at most $r$ that start at $v$, where two vertices are adjacent if one is a simple extension of the other. Specifically, vertices $(v_0, \dots, v_n)$ and $(u_0,\dots, u_m)$ with $n<m$ are adjacent if and only if $m=n+1$ and $v_i=u_i$ for all $i\in \{0,\dots, n\}$. We say this edge of $\tilde{G}(v,r)$ is extended by the edge $u_{m-1}u_m$ in the graph $G$. Two vertices of the same length are never adjacent.
We assign a weight to each edge in $\tilde{G}(v,r)$ equal to the weight of its extending edge in $G$. 

In other words, the graph $\tilde{G}(v,r)$, which we call an \textit{unraveled ball}, is isomorphic to a ball  of radius $r$ in the universal cover $\tilde{G}$ of $G$. Slightly abusing notation, in the current paper, we say $\tilde{G}(v,r)$ is an induced subgraph of $\tilde{G}$
\end{definition}

It is worth mentioning that we may look at the set $W_1(G)$ as the set of directed edges of a graph $G$, that is, for any edge $xy\in E(G)$, there are two corresponding non-backtracking edges $(x,y)$ and $(y,x)$ in $W_1(G)$. So, we write $w_{(x,y)}$ for $w_{xy}$ if $xy\in E(G)$. 

The weighted adjacency matrix of a weighted graph $G$ is denoted by $A_G$. Let $\lambda_i(G)$ be the $i$-th eigenvalue of $G$. The main corollary of Theorem~\ref{theorem most general version}, the central result of the paper, is presented below.

\begin{theorem}
\label{theorem spectral radius of unraveled balls 2}
    Let $G$ be a $w$-regular weighted graph with minimum degree at least $2$. Then for any positive integer $r$, there is a vertex $v$ such that
\begin{equation}
    \frac{\lambda_1(\tilde{G}(v,r))}{\lambda_1(P_{r+1})} \geq \frac{\sum_{e\in W_1(G)} w_e^{3/2}(w-w_e)^{1/2}}{w|V(G)|}.
\end{equation}
\end{theorem}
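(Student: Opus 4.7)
The plan is to derive this corollary from the main general bound Theorem~\ref{theorem most general version} by averaging over the starting vertex $v$ and then invoking a Cauchy--Schwarz step to match the specific exponents $3/2$ and $1/2$ appearing in the right-hand side.

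First, I would apply Theorem~\ref{theorem most general version} at every vertex $v\in V(G)$ to obtain a pointwise lower bound of the shape
\[
\lambda_1(\tilde G(v,r)) \geq \lambda_1(P_{r+1}) \cdot F(v),
\]
where $F(v)$ is a local functional of the weights of the edges incident to $v$. The mechanism underlying such a bound is standard: on the tree $\tilde G(v,r)$ one evaluates the Rayleigh quotient of $A_{\tilde G(v,r)}$ against a test vector built from the Perron eigenvector $\sin((i+1)\pi/(r+2))$ of $P_{r+1}$, rescaled at each vertex by a factor that tracks the product of edge weights along the unique non-backtracking walk from $v$ to that vertex. Because $\tilde G(v,r)$ is a tree, cross terms between distinct walks vanish and a closed-form expression for $F(v)$ should emerge; the minimum-degree-at-least-$2$ hypothesis ensures that each non-leaf of $\tilde G(v,r)$ has a genuine non-backtracking continuation, so that $F(v)$ is nontrivial.

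Second, I would sum the pointwise bound over $v\in V(G)$ and rearrange the sum $\sum_v F(v)$ as a sum over the directed edge set $W_1(G)$. The $w$-regularity is essential here: each directed edge $e=(x,y)\in W_1(G)$ contributes a term that depends on $w_e$ together with the total weight $w-w_e$ of the remaining, non-backtracking continuations at its tail $x$. Applying Cauchy--Schwarz in the form $\sum_e \sqrt{a_e\, b_e} \leq \bigl(\sum_e a_e\bigr)^{1/2}\bigl(\sum_e b_e\bigr)^{1/2}$ with $a_e$ capturing the ``forward'' weight $w_e$ and $b_e$ capturing the ``sideways'' weight $w-w_e$ should produce precisely the combination $w_e^{3/2}(w-w_e)^{1/2}$ in the numerator. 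The denominator $w|V(G)| = \sum_{e\in W_1(G)} w_e$ appears as the natural normalization from the $w$-regularity.

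Finally, by pigeonhole there exists $v$ with $F(v)$ at least as large as its average, producing the stated inequality.

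The main obstacle I anticipate is the Cauchy--Schwarz step in the middle: one must arrange the summation so that, after rearranging $\sum_v F(v)$ over $W_1(G)$, the exponent split is exactly $3/2$ on $w_e$ and $1/2$ on $w-w_e$ rather than, say, a symmetric $1$ on each (which would give a weaker bound). As a sanity check I would verify that in the unweighted $d$-regular case, where $w=d$ and $w_e=1$ for every $e\in W_1(G)$, the right-hand side collapses to $\sqrt{d-1}$, thereby recovering the classical Alon--Boppana constant and confirming that the Cauchy--Schwarz split has been chosen correctly.
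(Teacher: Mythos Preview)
Your plan has two concrete gaps. First, Theorem~\ref{theorem most general version} is already an existence statement (``there is a vertex $v$''), not a pointwise bound $\lambda_1(\tilde G(v,r))\ge \lambda_1(P_{r+1})F(v)$; its proof works with a single Rayleigh quotient on the forest $\bigcup_v \tilde G(v,r)$, and the pigeonhole step is that $\max_v\lambda_1(\tilde G(v,r))$ equals the spectral radius of that forest. So the ``apply at every vertex, average, then pigeonhole'' scaffolding is not what the theorem provides, and you never actually write down what $F(v)$ would be. Second, the Cauchy--Schwarz form you quote, $\sum_e\sqrt{a_e b_e}\le(\sum a_e)^{1/2}(\sum b_e)^{1/2}$, is an upper bound; it cannot by itself produce the required \emph{lower} bound on the right-hand side, so the mechanism you propose for generating the exponents $3/2$ and $1/2$ does not work as stated.

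The paper's argument is a direct specialisation of Theorem~\ref{theorem most general version} with no averaging and no Cauchy--Schwarz. The two choices you are missing are: (i) the Markov chain, whose transition probabilities are taken to be $p_{e_1,e_2}=w_{e_2}/(w-w_{e_1})$ for $e_1\to e_2$ (this uses $w$-regularity and has explicit stationary distribution $\pi_e\propto w_e(w-w_e)$), and (ii) the test function $g(e)=(w-w_e)^{-1/2}$. Plugging these into the general bound, the denominator collapses to $\sum_e w_e=w|V(G)|$, and in the numerator one sums $\sum_{e_2:e_1\to e_2}w_{e_2}=w-w_{e_1}$ to obtain $\sum_{e}w_e^{3/2}(w-w_e)^{1/2}$ directly. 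The exponents $3/2$ and $1/2$ thus come from the interaction of $\pi_e$, $\sqrt{p_{e_1,e_2}}$, and $g$, not from an inequality step. Your sanity check is fine (in the $d$-regular unweighted case the bound does reduce to $\sqrt{d-1}$), but the route to it needs these specific choices rather than a Cauchy--Schwarz argument.
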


We define the average combinatorial degree of a weighted graph $G$ as $\frac{2|E(G)|}{|V(G)|}$. Using Theorem~\ref{corollary strong form for w-regular graphs}, we obtain a variation of the Alon--Boppana theorem.

\begin{theorem}
\label{theorem alon boppana for weighted regular graphs}
Let $G$ be a $w$-regular weighted graph with combinatorial degree $d\geq 39$ and let $r$ be any positive integer. Assume that for every vertex $v$ there is an induced subgraph of $G\setminus G(v,r+1)$ with minimum weighted degree at least ${2w\sqrt{d-1}}/{d}$. Then we have
\[
\frac{\lambda_2(G)}{\lambda_1(P_{r+1})}\geq \frac{w\sqrt{d-1}}{d}.
\]
\end{theorem}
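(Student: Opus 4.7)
The plan is to produce, by a two-ball argument in the spirit of Friedman--Hoory--Jiang, two unit vectors $f_1, f_2 \in \R^{V(G)}$ with disjoint supports in $G$, each satisfying $\langle A_G f_i, f_i\rangle / \langle f_i, f_i\rangle \geq \lambda_1(P_{r+1}) \cdot w\sqrt{d-1}/d$. Since disjoint supports force orthogonality, the Courant--Fischer min-max characterization applied to $\mathrm{span}(f_1,f_2)$ yields $\lambda_2(G) \geq \lambda_1(P_{r+1}) \cdot w\sqrt{d-1}/d$. Each $f_i$ will be the pushdown to $G$ of a Perron eigenvector of an unraveled ball centered at a chosen vertex $v_i$, and the standard universal-cover computation (as in Jiang~\cite{jiang2019spectral} and Wang--Zhang~\cite{wang2022weighted}) ensures that the Rayleigh quotient of $f_i$ with respect to $A_G$ is at least the spectral radius of that unraveled ball.

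For the first vertex, I would apply Theorem~\ref{theorem spectral radius of unraveled balls 2} to $G$ itself, producing $v_1 \in V(G)$ with
\[
\frac{\lambda_1(\tilde G(v_1, r))}{\lambda_1(P_{r+1})} \;\geq\; \frac{\sum_{e \in W_1(G)} w_e^{3/2}(w-w_e)^{1/2}}{w|V(G)|}.
\]
The key analytic step is to bound the right-hand side from below by $w\sqrt{d-1}/d$ using only the combinatorial degree $d = 2|E(G)|/|V(G)|$. This is where $d \geq 39$ should enter, likely through a convexity or power-mean argument applied edge-by-edge, noting that for a combinatorially $d$-regular graph with uniform weights $w_e = w/d$ the expression evaluates \emph{exactly} to $w\sqrt{d-1}/d$, so the claim asserts a sort of optimality of the uniform configuration (away from the degenerate regime where a single edge carries almost all of $w$ at some vertex, which the combinatorial average $d \geq 39$ should forbid on average).

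For the second vertex, I would apply the most general version, Theorem~\ref{theorem most general version}, to the induced subgraph $H$ of $G \setminus G(v_1, r+1)$ with minimum weighted degree at least $2w\sqrt{d-1}/d$ supplied by the hypothesis, obtaining $v_2 \in V(H)$ with $\lambda_1(\tilde H(v_2, r))/\lambda_1(P_{r+1}) \geq w\sqrt{d-1}/d$. The factor of $2$ in the hypothesis of the theorem is expected to absorb the factor of $1/2$ that typically arises when one passes from the $w$-regular expression to a crude bound in terms of the minimum weighted degree of a non-regular graph. The resulting $f_2$ is supported in $H(v_2, r) \subseteq V(H) \subseteq V(G) \setminus G(v_1, r+1)$, while $f_1$ is supported in $G(v_1, r)$; the two supports are therefore disjoint, which delivers the orthogonality required in the first paragraph.

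The main obstacle is twofold. The first is the global combinatorial inequality
\[
\sum_{e \in W_1(G)} w_e^{3/2}(w - w_e)^{1/2} \;\geq\; w^2 |V(G)|\, \sqrt{d-1}/d
\]
under only the average degree hypothesis $d \geq 39$; the integrand $x^{3/2}(w-x)^{1/2}$ is convex near $0$ but concave near $w/2$, so no naive Jensen suffices, and the local extremal analysis must be combined with a global count tying variable $d_v$ back to the average $d$. The second is the derivation from Theorem~\ref{theorem most general version}, applied to $H$, of a clean bound in terms of the minimum weighted degree; I expect this to follow from monotonicity in the edge weights together with the same convexity step used on $G$, but checking that the $2w\sqrt{d-1}/d$ threshold is precisely what that derivation produces is the delicate book-keeping point.
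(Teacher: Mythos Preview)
Your overall framework---two test vectors with disjoint supports separated by the buffer $G(v_1,r+1)$, then a min-max/Rayleigh argument for $\lambda_2$---matches the paper. The real divergence is in the second vector, and there your plan is both harder and not clearly sound. The paper does \emph{not} apply any spectral machinery to the subgraph $H=G'$: it simply takes $f_2$ to be the indicator vector of $V(G')$, so that $\langle A_G f_2,f_2\rangle/\langle f_2,f_2\rangle$ is at least the minimum weighted degree of $G'$, namely $2w\sqrt{d-1}/d$; the factor $2$ in the hypothesis is absorbed not by any ``non-regular crude bound'' but by the trivial inequality $\lambda_1(P_{r+1})<2$. Your proposed route---applying Theorem~\ref{theorem most general version} to $H$---runs into two problems: $H$ need not have combinatorial minimum degree $\geq 2$ (the hypothesis only controls weighted degree), and even if it did, $H$ is not $w$-regular, so neither Corollary~\ref{corollary strong form for w-regular graphs} nor Corollary~\ref{corollary weak form for w-regular graphs} applies, and you give no indication how to extract the target bound from the raw Theorem~\ref{theorem most general version}.

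For the first vector you correctly identify the obstacle: $x\mapsto x^{3/2}(w-x)^{1/2}$ is convex only on $[0,\mu w]$ with $\mu=(3-\sqrt{3})/4$, so Jensen cannot be applied globally. The paper's resolution (Corollary~\ref{corollary weak form for w-regular graphs}) is a two-case split, not a ``global count tying variable $d_v$ back to the average $d$'': if some directed edge has $w_e\geq \mu w$, the single edge already gives $\lambda_1(\tilde G(v,r))\geq w_e\geq \mu w\geq \lambda_1(P_{r+1})\,w\sqrt{d-1}/d$ once $d\geq 39$; otherwise every $w_e<\mu w$ and Jensen on the convex piece gives the inequality you wrote. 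Finally, note that disjoint supports alone give $\langle f_1,f_2\rangle=0$ but not $\langle A_G f_1,f_2\rangle=0$; you need (and have, via the $r+1$ buffer) that there are no edges between the supports for the Rayleigh quotient on $\mathrm{span}(f_1,f_2)$ to split as claimed.
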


A slightly more involved argument allows to show that Theorem~\ref{theorem alon boppana for weighted regular graphs} holds for $d\geq 7.1980\dots$; see Remark~\ref{central remark}.

The rest of the paper is organized as follows. Section \ref{section main result} presents the proof of the main result of the paper, Theorem~\ref{theorem most general version}, which requires auxiliary notation. In Section \ref{section corollaries}, we derive Corollary~\ref{corollary strong form for w-regular graphs}, slightly generalizing Theorem~\ref{theorem spectral radius of unraveled balls 2}, as well as its slightly weaker form, Corollary~\ref{corollary weak form for w-regular graphs}. Finally, we establish Theorem~\ref{theorem alon boppana for weighted regular graphs} in Section~\ref{section alon-boppana}.

\subsection*{Acknowledgments.} We thank Alexander Golovanov for fruitful discussions and careful proofreading of the paper.

\section{Bounding the maximum spectral radius of an unraveled ball}
\label{section main result}

\begin{definition}
Given a weighted graph $G$ with minimum degree at least 2, a stationary Markov chain $(E_i)_{i=1}^\infty$ on $W_1(G)$ is \textit{assigned to} $G$ if its transition matrix $P=(p_{e_1,e_2})_{e_1,e_2\in W_1(G)}$ satisfies
\begin{equation}
\label{equation transition matrix}
p_{e_1,e_2} = \Pr \big(E_{i}=e_2| E_{i-1}=e_1\big) = 0 \text{ if $e_2$ does not prolong $e_1$}.
\end{equation} 
We say that $e_2\in W_1(G)$ \textit{prolongs} $e_1\in W_1(G)$ if there are three distinct vertices $x,y,z\in V(G)$ such that $e_1=(z,y)$ and $e_2=(y,x)$. For the sake of brevity, we write $e_1\to e_2$ if $e_2$ prolongs $e_1$.
\end{definition}
If the minimum degree of a weighted (connected) graph $G$ is at least 2, then a stationary Markov chain assigned to $G$ has no absorbing states and its stationary distribution $\pi=(\pi_{e})_{e\in W_1(G)}$ is well defined. Since the Markov chain is stationary, we have
\begin{equation}
\label{equation stationary distribution}
\Pr\big(E_i=e\big)=\pi_e \text{ for any positive integer $i$.}
\end{equation}

Since the ending vertex of $E_i$ and the starting vertex of $E_{i+1}$ are the same, we can concatenate $E_1,\dots, E_i$ to form a random non-backtracking walk of length $i$, denoted by the random variables $Y_i=(X_0,\dots, X_i)$.

With these definitions, we can state and prove the following general theorem.

\begin{theorem}
\label{theorem most general version}
Let $G$ be a weighted graph with minimum degree at least 2, let $(E_i)_{i=1}^{+\infty}$ be a stationary Markov chain assigned to $G$ with transition matrix $P=(p_{e_1,e_2})_{e_1,e_2\in W_1(G)}$ and stationary distribution $\pi=(\pi_e)_{e\in W_1(G)}$. For any function $g:W_1(G)\to \mathbb R$ and a positive integer $r$, there is a vertex $v$ of $G$ such that
\[
    \frac{\lambda_1(\tilde{G}(v,r))}{\lambda_1(P_{r+1})}\geq \Big(
\sum\limits_{\substack{e_1,e_2\in W_1(G)\\e_1\to e_2}} w_{e_2}g(e_1)g(e_2) \pi_{e_1}\sqrt{p_{e_1,e_2}}
\Big) 
\Big(
\sum\limits_{e\in W_1(G)} g^2(e)\pi_e
\Big)^{-1}.
\]
\end{theorem}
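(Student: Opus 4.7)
The plan is to apply the variational characterization $\lambda_1(\tilde{G}(v,r))\geq \langle A f,f\rangle/\|f\|^2$ to a carefully designed family of test functions $\{f_v\}_{v\in V(G)}$ on the unraveled balls, and then to aggregate across roots $v$ via the elementary inequality $\max_v (N_v/D_v)\geq (\sum_v N_v)/(\sum_v D_v)$. The crucial observation is that summing over $v$ converts local sums of the form $\sum_{Y_i\text{ starting at }v}(\cdots)$ into Markov-chain expectations, and by the stationarity of the assigned chain these reproduce precisely the two global quantities $M:=\sum_e g^2(e)\pi_e$ and $N':=\sum_{e_1\to e_2} w_{e_2}g(e_1)g(e_2)\pi_{e_1}\sqrt{p_{e_1,e_2}}$ appearing on the right-hand side.

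Explicitly, let $\psi=(\psi_0,\ldots,\psi_r)$ be the Perron eigenvector of $P_{r+1}$ with eigenvalue $\lambda:=\lambda_1(P_{r+1})$, satisfying $\psi_{i-1}+\psi_{i+1}=\lambda\psi_i$ under the boundary convention $\psi_{-1}=\psi_{r+1}=0$ (and hence $\psi_1=\lambda\psi_0$). For $u\in V(\tilde{G}(v,r))$ at depth $i\geq 1$, identified with the non-backtracking walk $Y_i=(v,v_1,\ldots,v_i)$ with last directed edge $E_i=(v_{i-1},v_i)$, I would set
\[
f_v(u)=\psi_i\cdot g(E_i)\cdot\sqrt{\mu(Y_i)},\qquad \mu(Y_i):=\pi_{E_1}\prod_{j=1}^{i-1}p_{E_j,E_{j+1}},
\]
together with $f_v(\mathrm{root})=\psi_0\cdot\alpha_v$ for an auxiliary coefficient $\alpha_v$ to be fixed. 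Since $\mu(Y_i)$ is precisely the stationary probability of the walk $Y_i$, stationarity yields, for each $i\geq 1$,
\[
\sum_v\sum_{Y_i\text{ from }v}g(E_i)^2\mu(Y_i)=M\quad\text{and}\quad\sum_v\sum_{Y_{i+1}\text{ from }v}w_{E_{i+1}}g(E_i)g(E_{i+1})\mu(Y_i)\sqrt{p_{E_i,E_{i+1}}}=N'.
\]

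Combining these identities with the Dirichlet relation $\lambda\sum_{i=0}^r\psi_i^2=2\sum_{i=0}^{r-1}\psi_i\psi_{i+1}$ and $\psi_1=\lambda\psi_0$, the target inequality reduces to the scalar condition $2M\sum_v\alpha_v s_v\geq N'(M+\sum_v\alpha_v^2)$, where $s_v:=\sum_{e=(v,\cdot)}w_e g(e)\sqrt{\pi_e}$; this is then closed by taking $\alpha_v$ proportional to $s_v$ and invoking Cauchy--Schwarz together with AM--GM. The main technical obstacle is precisely this root-level matching: unlike interior vertices, the root lacks an incoming non-backtracking edge on which to anchor a factor $\sqrt{p_{e_0,e_1}}$, so $\alpha_v$ must be calibrated carefully so that the depth-$0$ contribution neither swamps the denominator nor underperforms in the numerator. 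Once this is settled, the argument concludes via the max-over-ratios inequality, selecting the required vertex~$v$.
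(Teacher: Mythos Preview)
Your overall framework coincides with the paper's: it too passes to the disjoint union $F_G=\bigcup_{v}\tilde G(v,r)$ (which is exactly your ``$\max_v N_v/D_v\ge (\sum_v N_v)/(\sum_v D_v)$'' step) and uses, at depth $i\ge 1$, the test value $x_i\,g(E_i)\sqrt{\Pr(Y_i=\omega)}$, so that after summing over roots the Markov sums collapse by stationarity to $M$ in the denominator and $N'$ in the numerator.

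The gap is precisely at the step you yourself flag as ``the main technical obstacle''. Your scalar condition
\[
2M\sum_v\alpha_v s_v\ \ge\ N'\Big(M+\sum_v\alpha_v^2\Big)
\]
is \emph{not} achievable for arbitrary real $g$. Optimizing over $(\alpha_v)$ shows it is equivalent to $M\sum_v s_v^2\ge (N')^2$, and this can fail: take a cycle $C_n$ with unit edge weights (so the assigned chain is deterministic, $\pi_e\equiv 1/(2n)$ and $p_{e_1,e_2}=1$) and set $g=+1$ on the clockwise directed edges and $g=-1$ on the counterclockwise ones. Then $s_v=\sum_{e=(v,\cdot)}w_eg(e)\sqrt{\pi_e}=0$ for every $v$, while $M=N'=1$. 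Your left-hand side is $0$ and the right-hand side is at least $1$, so no choice of $\alpha_v$ works and no Cauchy--Schwarz/AM--GM can rescue it. (For nonnegative $g$ your inequality does go through, but the theorem is stated for all $g:W_1(G)\to\mathbb R$.)

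The paper sidesteps the obstacle rather than solving it: it simply sets $f(\text{root})=0$. Correspondingly, it indexes the Perron eigenvector of the path as $(x_1,x_2,\ldots)$ and assigns $x_i$ to depth $i$ for $i\ge 1$; no $\psi_0$ is ever used. The root-to-depth-$1$ edges then contribute nothing to $\langle f,A_{F_G}f\rangle$, and one gets the clean factorizations
\[
\langle f,f\rangle=M\sum_i x_i^2,\qquad \langle f,A_{F_G}f\rangle=N'\sum_i 2x_{i-1}x_i,
\]
so the path identity $\sum_i 2x_{i-1}x_i=\lambda_1(P_{r+1})\sum_i x_i^2$ finishes the argument directly, with no root calibration at all.
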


\begin{proof}

Let us begin by defining the weighted forest $F_G$ as the union of all graphs $\tilde{G}(v,r)$, where $v\in V(G)$, that is,
\[
    F_G=\bigcup_{v\in V(G)}\tilde{G}(v,r).
\] 
Thus, the vertex set of $F_G$ is $\bigcup_{i=0}^{r+1}W_i(G)$. Since $F_G$ is a union of disjoint trees, we have 
\[
    \lambda_1(F_G)=\max\big\{\lambda_1(\tilde{G}(v,r)), v\in V(G)\big\}.
\]
Therefore, it is sufficient to show that 
\[
\frac{\lambda_1(F_G)}{\lambda_1(P_{r+1})} \geq 
\Big(
\sum\limits_{\substack{e_1,e_2\in W_1(G)\\e_1\to e_2}} w_{e_2}g(e_1)g(e_2) \pi_{e_1}\sqrt{p_{e_1,e_2}}
\Big) 
\Big(
\sum\limits_{e\in W_1(G)} g^2(e)\pi_e
\Big)^{-1}.
\]

Denote by $(x_1,\dots, x_{r+1})\in \mathbb R^{r+1}$ the eigenvector of the spectral radius $\lambda_1(P_{r+1})$ of the path $P_{r+1}$ of length $r$. Then the Rayleigh principle yields
\begin{equation}\label{equation Rayleigh for the path of length r}
    \sum\limits_{i = 2}^{r+1} 2x_{i-1}x_i = \lambda_1(P_{r+1}) \sum\limits_{i=1}^{r+1}x_i^2.
\end{equation}

Define a vector $f\in \mathbb R^{V(F)}$ by setting, for $\omega=(v_0,v_1,\dots,v_i)\in W_i(G)$ and $\omega'=(v_{i-1},v_i)\in W_1(G)$,
\[
    f(\omega):= 
    \begin{cases}
    0, & \text{ if } i=0,\\
    x_i g\big(\omega'\big)\sqrt{\Pr(Y_i = \omega)}, & \text{ otherwise.}
    \end{cases}
\]

Therefore, we have
%\begin{linenomath}
\begin{align}
    \langle f, f\rangle =& \sum\limits_{i = 1}^{r+1} \sum\limits_{\omega \in W_i(G)} f(\omega)^2 \\
    = &\sum\limits_{i = 1}^{r+1} x_i^2 \sum\limits_{\omega \in W_i(G)} g^2(\omega') \Pr(Y_i =\omega)\\ 
    = &\sum\limits_{i = 1}^{r+1} x_i^2 \sum_{\omega'\in W_1(G)} g^2(\omega') \Pr(E_i=\omega')=\\
    =&\sum_{i=1}^{r+1}x_i^2 \sum_{e\in W_1(G)} g^2(e) \pi_e.\label{equation scalar product f,f}
\end{align}
%\end{linenomath}
Denoting $\omega^-=(v_0,\dots, v_{i-1})$ and $\omega''=(v_{i-2},v_{i-1})$, we obtain
%\begin{linenomath}
\begin{align}
\langle f, A_{F_G} f\rangle =& \sum\limits_{i = 2}^{r+1} \sum\limits_{\omega \in W_i(G)} 2x_{i-1}x_iw_{\omega^-} f(\omega)f(\omega^-) \\
    =&\sum\limits_{i = 2}^{r+1} 2x_{i-1}x_{i}\sum\limits_{\omega \in W_i(G)} w_{\omega'}g(\omega'')g(\omega')\sqrt{\Pr(Y_{i-1} =\omega^-)\Pr(Y_i =\omega)}.
\end{align}
%\end{linenomath}
For the Markov chain, we have
\begin{equation}\label{equation ratio of probabilities}
    \frac{\Pr(Y_i =\omega)}{\Pr(Y_{i-1} =\omega^-)} = \Pr (E_i=\omega'|E_{i-1}=\omega'') = p_{\omega'',\omega'}.
\end{equation}
Substituting this in the equation for $\langle f, A_{F_G} f\rangle$, we get
%\begin{linenomath}
\begin{align} \label{equation f,Wf second time}
    \langle f, A_{F_G} f \rangle &= \sum\limits_{i = 2}^{r+1} 2x_{i-1}x_{i}\sum\limits_{\omega \in W_i(G)}
    \frac{w_{\omega'}g(\omega'')g(\omega')}{\sqrt{p_{\omega'',\omega'}}} \Pr(Y_i =\omega)
    \\&=\sum_{i=2}^{r+1} 2x_{i-1}x_i \sum_{\substack{e_1,e_2\in W_1(G)\\e_1\to e_2}} \frac{w_{e_2}g(e_1)g(e_2)}{\sqrt{p_{e_1,e_2}}} \Pr\big(E_i=e_2, E_{i-1}=e_1 \big).
\end{align}
%\end{linenomath}
Since
\(
\Pr\big(E_i=e_2, E_{i-1}=e_1 \big)= \Pr\big(E_i=e_2|E_{i-1}=e_1\big) \Pr\big(E_{i-1}=e_1\big)=p_{e_1,e_2}\pi_{e_1}
\),  
we easily conclude that 
\[
    \langle f, A_{F_G} f \rangle = \sum_{i=2}^{r+1} 2x_{i-1}x_i \sum_{\substack{e_1,e_2\in W_1(G)\\e_1\to e_2}} w_{e_2}g(e_1)g(e_2) \pi_{e_1}\sqrt{p_{e_1,e_2}}.    \label{equation scalar product f,Wf}
\]
Combining this equality with \eqref{equation Rayleigh for the path of length r}, \eqref{equation scalar product f,f},  and the Rayleigh principle $\lambda_1(F_G)\geq \langle f, A_{F_G} f \rangle/\langle f,f \rangle$, we finish the proof.
\end{proof}

\begin{remark}
\label{remark universal cover general bound}
Recall that $\tilde{G}(v,r)$ is a ball of radius $r$ centered at $v$ in the universal cover $\tilde{G}$ of $G$. So by the monotonicity of spectral radius, $\lambda_1(\tilde{G})\geq \lambda_1(\tilde{G}(v,r))$. Since $\lambda_1(P_{r+1})\to 2$ as $r\to +\infty$, under the assumptions of Theorem~\ref{theorem most general version}, we obtain
\[
    \lambda_1(\tilde{G}) \geq 2\Big(
\sum\limits_{\substack{e_1,e_2\in W_1(G)\\e_1\to e_2}} w_{e_2}g(e_1)g(e_2) \pi_{e_1}\sqrt{p_{e_1,e_2}}
\Big) 
\Big(
\sum\limits_{e\in W_1(G)} g^2(e)\pi_e
\Big)^{-1}.
\]
\end{remark}

\section{Proofs of corollaries on regular weighted graphs}
\label{section corollaries}

To prove special cases of Theorem~\ref{theorem most general version}, the authors of~\cite[Thereom~1]{jiang2019spectral} and~\cite[Theorem~1.5]{wang2022weighted} consider the following stationary Markov chain on $W_1(G)$ such that its stationary distribution can be easily found. Namely, they assumed that given the stage $E_i=(v_{i-1},v_i)$, the stage $E_{i+1}$ is chosen among $\{(v_i,u)\in W_1(G):u\ne v_{i+2}\}$ uniformly at random. Hence the transition matrix $P=(p_{e_1,e_2})_{e_1,e_2\in W_1(G)}$ of this Markov chain is defined by
\[
p_{(x,y),(z,t)}=
\begin{cases}
\dfrac{1}{\deg y - 1} &\text{if $y=z$ and $t\ne x$};\\
0 &\text{ otherwise.}
\end{cases}
\]
One can easily verify that the distribution \(\pi=(\pi_e)_{e\in W_1(G)}\) with $\pi_e=\frac{1}{|W_1(G)|}$ is stationary. 

In the next general corollary of Theorem~\ref{theorem most general version}, we use another stationary Markov chain assigned to a regular weighted graph such that its stationary distribution be explicitly found.

\begin{corollary}
\label{corollary strong form for w-regular graphs}

Let $G$ be a $w$-regular weighted graph with minimum degree at least $2$. For any function $g:W_1(G)\to \mathbb R$ and any positive integer $r$, there is a vertex $v$ of $G$ such that
\begin{equation}
    \frac{\lambda_1(\tilde{G}(v,r))}{\lambda_1(P_{r+1})} \geq \Big(\sum_{\substack{e_1,e_2\in W_1(G)\\e_1\to e_2}} g(e_1)g(e_2)w_{e_1}w_{e_2}^{3/2}(w-w_{e_1})^{1/2}\Big)\Big(\sum_{e\in W_1(G)} g^2(e) w_{e}(w-w_e)\Big)^{-1}.
    \label{equation general bound}
\end{equation}
Particularly, choosing $g(e)=(w-w_e)^{-1/2}$, we have
\begin{equation}
    \frac{\lambda_1(\tilde{G}(v,r))}{\lambda_1(P_{r+1})} \geq \frac{\sum_{e\in W_1(G)} w_e^{3/2}(w-w_e)^{1/2}}{w|V(G)|}.
\end{equation}
\end{corollary}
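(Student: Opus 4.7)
The plan is to invoke Theorem~\ref{theorem most general version} with an appropriately chosen Markov chain on $W_1(G)$. Unlike the uniform chain used in previous work~\cite{jiang2019spectral,wang2022weighted}, I will use a weight-sensitive chain that, given $E_i=(x,y)$, picks the next directed edge $E_{i+1}=(y,z)$ with probability proportional to $w_{yz}$; explicitly,
\[
p_{(x,y),(y,z)} := \frac{w_{yz}}{w - w_{xy}} \qquad \text{for } z \neq x, \ xy,yz \in E(G),
\]
and $p_{e_1,e_2}=0$ otherwise. This matrix is stochastic because $w$-regularity at $y$ gives $\sum_{z\neq x}w_{yz}=w-w_{xy}$.

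The key calculation is to verify that
\[
\pi_e = \frac{w_e(w - w_e)}{C}, \qquad C := \sum_{e \in W_1(G)} w_e(w - w_e),
\]
is the stationary distribution. For $e_2=(y,z)$ the factors $w - w_{xy}$ in $\pi_{e_1}$ and in $p_{e_1,e_2}$ cancel, leaving
\[
\sum_{e_1 \to e_2}\pi_{e_1}\, p_{e_1,e_2} \;=\; \frac{w_{yz}}{C}\sum_{\substack{x:\,xy \in E(G)\\ x \neq z}} w_{xy} \;=\; \frac{w_{yz}(w - w_{yz})}{C} \;=\; \pi_{e_2},
\]
again by $w$-regularity at $y$.

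Substituting $\pi$ and $\sqrt{p_{e_1,e_2}} = w_{e_2}^{1/2}(w-w_{e_1})^{-1/2}$ into Theorem~\ref{theorem most general version}, the constant $C$ cancels between the numerator and denominator, and direct simplification produces \eqref{equation general bound}. For the specific choice $g(e)=(w-w_e)^{-1/2}$, the denominator becomes $\sum_{e\in W_1(G)} w_e = w|V(G)|$ by $w$-regularity; in the numerator, fixing $e_2$ and summing the factor $w_{e_1}$ over $e_1 \to e_2$ again gives $w - w_{e_2}$, which combines with $(w-w_{e_2})^{-1/2}$ to yield $\sum_{e\in W_1(G)} w_e^{3/2}(w-w_e)^{1/2}$, as claimed.

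The only nontrivial step is identifying the correct weight-proportional chain and its stationary distribution; everything afterwards is routine bookkeeping. No analytic obstacle is anticipated, since the $w$-regularity assumption is precisely what makes the sums over $e_1 \to e_2$ telescope cleanly.
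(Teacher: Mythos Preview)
Your proposal is correct and essentially identical to the paper's own proof: the paper introduces the same weight-proportional Markov chain $p_{e_1,e_2}=w_{e_2}/(w-w_{e_1})$, verifies the same stationary distribution $\pi_e \propto w_e(w-w_e)$, and then substitutes into Theorem~\ref{theorem most general version}. The only cosmetic difference is in the second part, where the paper collapses the double sum by fixing $e_1$ and summing $w_{e_2}$ over $e_2$ with $e_1\to e_2$, while you fix $e_2$ and sum $w_{e_1}$; both use the same $w$-regularity identity and arrive at the same expression.
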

\begin{proof}
Consider a stationary Markov chain assigned to $G$ defined by its transition matrix $P=(p_{e_1,e_2})_{e_1,e_2\in W_1(G)}$ as follows
\[
    p_{e_1,e_2}=
    \begin{cases}
    \dfrac{w_{e_2}}{w-w_{e_1}} & \text{if $e_1\to e_2$};\\
    0 & \text{otherwise.}
    \end{cases}
\]
Using $\pi = \pi P$ for the stationary distribution $\pi=(\pi_e)_{e\in W_1(G)}$, one can easily verify that
\[
    \pi_e=\frac{w_e(w-w_e)}{S}, \text{\ \ where\ } S= \sum_{e\in W_1(G)} w_e(w-w_e).
\]
Applying Theorem~\ref{theorem most general version} to this Markov chain, we easily obtain the first desired inequality.

Assuming that $g(e)=(w-w_e)^{-1/2}$, we have
\begin{equation}
\frac{\lambda_1(\tilde{G}(v,r))}{\lambda_1(P_{r+1})}\geq \Big( \sum_{\substack{e_1,e_2\in W_1(G)\\e_1\to e_2}} w_{e_1}w_{e_2}^{3/2}(w-w_{e_2})^{-1/2}\Big)\Big(\sum_{e\in W_1(G)}w_e\Big)^{-1}.
\label{equation trivial bound}
\end{equation}

The multipliers of the right-hand side of \eqref{equation trivial bound} can be easily found
%\begin{linenomath}
\begin{align}
    \sum_{e\in W_1(G)} w_e=&w|V(G)|.\\
    \sum_{\substack{e_1,e_2\in W_1(G)\\e_1\to e_2}}w_{e_1}^{1/2}w_{e_2}(w-w_{e_1})^{-1/2}=&\sum_{e_1\in W_1(G)} w_{e_1}^{3/2}(w-w_{e_1})^{-1/2}\sum_{e_2:e_1\to e_2}w_{e_2}
    \\=&\sum_{e\in W_1(G)} w_e^{3/2}(w-w_e)^{1/2}.  
\end{align}
%\end{linenomath}
Substituting these equalities in \eqref{equation trivial bound}, we obtain the second desired inequality.
\end{proof}

\begin{corollary}
\label{corollary weak form for w-regular graphs}
    Let $G$ be a $w$-regular weighted graph with minimum degree at least $2$. Assume that its average combinatorial degree $d\geq 2$ satisfies $2\sqrt{d-1}/d\leq \mu$, where $\mu = \frac{3-\sqrt{3}}{4}$ (that is, $d \geq 38.7620\dots$). Then, for any positive integer $r$, we have
    \[
    \frac{\lambda_1(\tilde{G}(v,r))}{\lambda_1(P_{r+1})}\geq \frac{w\sqrt{d-1}}{d}.
    \]
\end{corollary}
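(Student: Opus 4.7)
The plan is to apply Corollary~\ref{corollary strong form for w-regular graphs} (second inequality, with $g(e) = (w-w_e)^{-1/2}$) to reduce the claim to the following purely analytic statement. Setting $y_e := w_e/w$ and $f(y) := y^{3/2}(1-y)^{1/2}$, one must show
\[
\sum_{e \in W_1(G)} f(y_e) \geq |V(G)|\,\sqrt{d-1}/d,
\]
subject to $\sum_e y_e = |V(G)|$, $|W_1(G)| = d\,|V(G)|$, and the per-vertex constraint $\sum_{v \sim u} y_{uv} = 1$ for each $u \in V(G)$.

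The central idea is to compare $f$ against its tangent line $L(y) := f(1/d) + f'(1/d)(y - 1/d)$ at the ``correct'' point $y_0 = 1/d$. Since $\sum_e(y_e - 1/d) = |V(G)| - (1/d)|W_1(G)| = 0$, the linear term annihilates and a direct calculation yields
\[
\sum_{e \in W_1(G)} L(y_e) = |W_1(G)|\,f(1/d) = d\,|V(G)| \cdot \sqrt{d-1}/d^2 = |V(G)|\,\sqrt{d-1}/d,
\]
matching the target exactly. Hence the problem reduces to proving $\sum_e [f(y_e) - L(y_e)] \geq 0$. A direct computation gives $f''(y) = (8y^2 - 12y + 3)/[4\sqrt{y}(1-y)^{3/2}]$, so $f$ is convex on $[0,\mu]$ and concave on $[\mu,1]$, where $\mu = (3-\sqrt{3})/4$ is the inflection point. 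The hypothesis $2\sqrt{d-1}/d \leq \mu$ guarantees $1/d < \mu$, so on the convex side $L \leq f$, and every edge with $y_e \leq \mu$ (``light'' edge) contributes nonnegatively to the sum $\sum_e [f(y_e) - L(y_e)]$.

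The difficulty is ``heavy'' edges with $y_e > \mu$, for which $f(y_e) - L(y_e)$ may be negative. To compensate, one exploits the per-vertex constraint: a heavy edge $uv$ with $y_{uv}$ close to $1$ forces all the remaining edges at $u$ to have small weights summing to $1 - y_{uv} < 1 - \mu$; those light sibling edges live deep in the convex region and provide positive contributions $f - L$ which can be made to offset the deficit at $uv$. The concrete threshold $2\sqrt{d-1}/d \leq \mu$ is precisely the tightness condition under which this local balancing succeeds. The natural extremal case that prevents any weakening is the combinatorially $d$-regular graph with uniform weights $w_e = w/d$: there $y_e = 1/d$ for every $e$, and $f(y_e) - L(y_e)$ vanishes identically.

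The main obstacle I anticipate is making this compensation quantitative. One needs a pointwise per-vertex inequality that trades the heavy-edge deficit $L(y_{uv}) - f(y_{uv})$ against the surplus $\sum_{v' \neq v}(f(y_{uv'}) - L(y_{uv'}))$ produced by the sibling edges, using the explicit forms of $f$ and $L$ together with the concavity of $f$ on $[\mu,1]$. Summing this inequality over all vertices (and double-counting carefully, since each edge is shared by two endpoints) should yield the desired global bound $\sum_e [f(y_e) - L(y_e)] \geq 0$, completing the proof.
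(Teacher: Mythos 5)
Your reduction of the corollary to the purely analytic inequality $\sum_{e\in W_1(G)} f(y_e)\ \geq\ |V(G)|\sqrt{d-1}/d$ cannot be completed: the compensation step you defer (offsetting the deficit of a heavy edge by the surplus of its light siblings) is not just unproven, it fails, and indeed the analytic inequality itself is false under the hypotheses of the corollary. Take $M$ disjoint $4$-cycles with alternating weights $w-\epsilon,\epsilon,w-\epsilon,\epsilon$ (every vertex is $w$-regular with combinatorial degree $2$), together with a combinatorially $D$-regular graph on $N=M$ vertices with all weights $w/D$. With $D=500$ the average combinatorial degree is $d=(8M+DN)/(4M+N)=101.6\geq 39$, so the hypothesis $2\sqrt{d-1}/d\leq\mu$ holds; yet as $\epsilon\to 0$ your left-hand side tends to $N\sqrt{D-1}/D\approx 0.045\,N$, while the right-hand side is $(4M+N)\sqrt{d-1}/d\approx 0.49\,N$. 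The same failure is visible locally in the per-vertex balancing you hope for: at a degree-$2$ vertex whose heavy edge has $y\approx 1$, the deficit is $L(1)-f(1)\approx \tfrac{3}{2}d^{-1/2}$, whereas its only sibling, with $y\approx 0$, has surplus $f(0)-L(0)=\tfrac{d-2}{2d^{2}\sqrt{d-1}}\approx\tfrac{1}{2}d^{-3/2}$, too small by a factor of order $d$. So no tangent-line/convexity argument alone, however cleverly it exploits the per-vertex constraints, can prove the corollary.

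What is missing is a separate, spectral (not analytic) argument for heavy edges, and this is exactly where the hypothesis $2\sqrt{d-1}/d\leq\mu$ is actually used in the paper. If some $e=(v,u)$ has $w_e\geq\mu w$, then the test vector equal to $1$ on the two vertices $(v)$ and $(v,u)$ of $\tilde{G}(v,r)$ and $0$ elsewhere already gives $\lambda_1(\tilde{G}(v,r))\geq w_e\geq\mu w\geq 2w\sqrt{d-1}/d>\lambda_1(P_{r+1})\,w\sqrt{d-1}/d$, so the conclusion holds at once (this disposes of the counterexample above). In the remaining case all $w_e\leq\mu w$ lie in the interval where $y\mapsto y^{3/2}(1-y)^{1/2}$ is convex, and plain Jensen at the mean $w/d$ (your tangent line at $1/d$ works equally well there, since then $f\geq L$ pointwise on the whole relevant range) yields the analytic bound with no compensation between edges needed. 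Your tangent-line idea is, in fact, essentially the device of Remark~\ref{central remark}, but there too it is combined with a heavy-edge spectral case; on its own it cannot carry the proof.
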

\begin{proof}
There are two possible cases. 

\textit{Case 1.} There is $e=(v,u)\in W_1(G)$ such that $w_e \geq \mu w$. Define a vector $f'\in \mathbb R^{V(\tilde{G}(v,r))}$ by setting
\[
f'(x)=
\begin{cases}1&\text{ if $x\in\{ (v),(v,u)\}$;}\\0&\text{ otherwise.}\end{cases}
\]
Using the Rayleigh principle, we have 
\[
\lambda_1(\tilde{G}(v,r))\geq 
\frac{\langle f', A_{\tilde{G}(v,r)} f'\rangle}
{\langle f', f'\rangle}=w_{e}\geq \mu w,
\]
which finishes the proof in this case as $2\sqrt{d-1}/d\leq \mu$ and $\lambda_1(P_{r+1})<2$.
\medskip

\textit{Case 2.} For any vertex $e\in W_1(G)$, we have $w_e\leq \mu w$. By a straightforward computation, we obtain that the function $x\mapsto x^{3/2}(w-x)^{1/2}$ is convex on the interval $[0,\mu w]$ and concave on $[\mu w, w]$, where $\mu=\frac{3-\sqrt{3}}{4}$. By Jensen's inequality, we obtain
\[
    \sum_{e\in W_1(G)} w_e^{3/2}(w-w_e)^{1/2} \geq  |W_1(G)|\Big( \frac{w}{d}\Big)^{3/2}\Big(w-\frac{w}{d}\Big)^{1/2} = |V(G)|\frac{w^2\sqrt{d-1}}{d}.
\]
Substituting this inequality in the second inequality of Corollary \ref{corollary strong form for w-regular graphs}, we finish the proof.
\end{proof}

\begin{remark}
\label{central remark}
Slightly modifying the argument, we can prove Corollary~\ref{corollary weak form for w-regular graphs} for $d\geq 1/t_0=7.1980\dots$, where $t_0$ is defined below. 

Let $g:[0,w]\to \mathbb R$ be a function given by $g(y)=y^{3/2}(w-y)^{1/2}$. For any $t\in (0,1)$, let $\ell_{t}:\mathbb R\to \mathbb R$ be an affine function defining the tangent line of $g$ at the point $tw\in (0,w)$; see Figure~\ref{fig:h(x)}. Consider the following system of equations and inequalities
\[
\begin{cases}
\ell_t(xw)=g(xw);\\
x=\frac{2\sqrt{1/t-1}}{1/t};\\
0<t<\mu=\frac{3-\sqrt{3}}{4}<x,
\end{cases}
\]
which has only one solution: $x=x_0=0.6917\dots, t=t_0=0.1389\dots$; see Figure~\ref{fig:h(x)}.
\begin{figure}
    \centering
    \begin{tikzpicture}[x=10cm, y=15cm]
    \draw[->] (-0.045, 0) -- (1.1, 0) node[right] {$x$};
    \draw[->] (0, -0.03) -- (0, 0.4) node[above] {$y$};
    \draw[domain=0:1, samples = 5000,  variable = \x, blue] plot ({\x}, {\x * sqrt(\x) * sqrt(1-\x)} );
    \draw[blue] (1, 0) -- (0.9999, 0.01);
    \draw [domain=0:0.8, samples = 2, variable = \x, red] plot ({\x}, {0.49087 * \x - 0.0201444} );
    \draw[dashed]  (0.1389, 0.4) -- (0.1389, 0) node [below]{$t_0w$};
    \draw[dashed]  (0.6917, 0.4) -- (0.6917, 0) node [below]{$x_0w$};
    \draw[dashed]  (0.316987, 0.4) -- (0.316987, 0) node [below]{$\mu w$};
\end{tikzpicture}
    \caption{Graphs of $g$ (blue) and $\ell_t$ (red).}
    \label{fig:h(x)}
\end{figure}
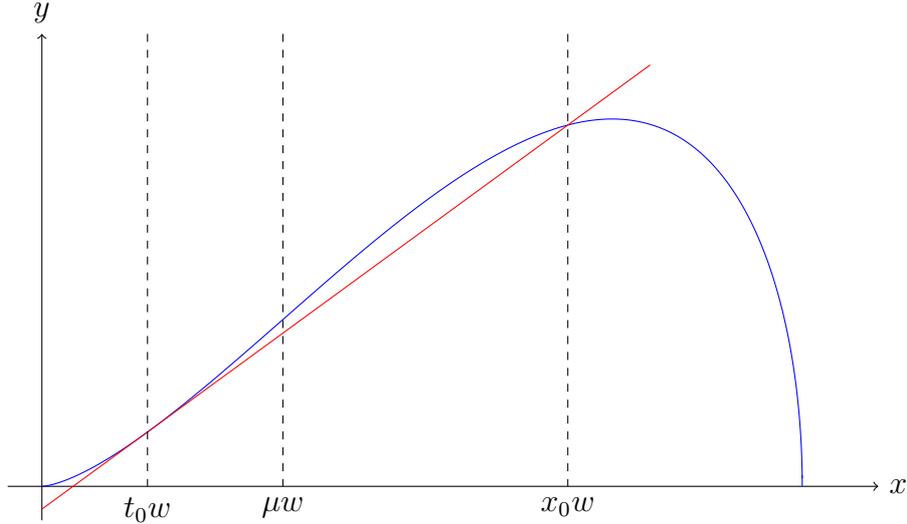

First, we may assume that
\[
    w_e\leq wx_0=\frac{2w\sqrt{1/t_0-1}}{1/t_0}
\] 
for all $e\in W_1(G)$; otherwise, we use the proof of the first case in Corollary~\ref{corollary weak form for w-regular graphs}. 

Next, consider the function $h:[0, x_0w]\to \mathbb R$ defined by
\[
h(x)=
\begin{cases}
g(x) &\text{ if }0 \leq x \leq t_0w;\\
\ell_{t_0}(x) &\text{ if } t_0w< x\leq x_0w.
\end{cases}
\]
Recall that the function $g$ is convex of $[0,\mu w]$. Since $0<t_0<\mu=\frac{3-\sqrt{3}}{4}$ and $\ell_{t_0}$ defines the tangent line for the graph of $g$ at the point $t_0w$, we conclude that the function $h$ is convex on $[0,x_0w]$ and $g(y)\geq h(y)$ for any $0\leq y\leq x_0w$. Therefore, we can apply Jensen's inequality for $h$ and obtain the desired inequality
%\begin{linenomath}
\begin{align}
    \sum_{e\in W_1(G)} w_e^{3/2}(w-w_e)^{1/2} =& \sum_{e\in W_1(G)} g(w_e) \\\geq& \sum_{e\in W_1(G)} h(w_e)
    \\\geq&  |W_1(G)|h\big( \frac{w}{d} \big) \\=& |W_1(G)| g\big(\frac{w}{d}\big) = |V(G)|\frac{w^2\sqrt{d-1}}{d}.
\end{align}
%\end{linenomath}
(Here we use that $d\geq 1/t_0$, and thus $h(\frac{w}{d})=g(\frac{w}{d})$ by the definition of $h$.) Substituting this inequality in the second inequality of Corollary \ref{corollary strong form for w-regular graphs}, we finish the proof.
\end{remark}
\begin{remark}
    Using Corollaries~\ref{corollary strong form for w-regular graphs} and \ref{corollary weak form for w-regular graphs}, we can prove lower bounds for the spectral radius for the universal cover $\tilde{G}$ of a $w$-regular weighted graph $G$ as it is shown in Remark~\ref{remark universal cover general bound}.
\end{remark}
\section{Proof of Theorem~\ref{theorem alon boppana for weighted regular graphs}}
\label{section alon-boppana}
The following lemma connects the spectral radii of balls $G(v,r)$ and $\tilde{G}(v,r)$.
\begin{lemma}{\cite[Lemma 4.2]{wang2022weighted}}
    \label{lemma connection between H and tilde H}
    For any vertex $v$ of a graph $H$ and any positive integer~$r$, we have
    \[
    \lambda_1(H(v,r))\geq \lambda_1(\tilde{H}(v,r)).
    \]
\end{lemma}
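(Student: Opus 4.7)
\medskip

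\textbf{Plan.} The unraveled ball $\tilde H(v,r)$ comes with a natural projection $\phi:V(\tilde H(v,r))\to V(H(v,r))$ that sends a non-backtracking walk $(v_0,\dots,v_i)$ to its endpoint $v_i$; by construction, $\phi$ sends edges to edges and preserves their weights. The plan is to take a Perron--Frobenius eigenvector $\tilde f$ of $\tilde H(v,r)$ with eigenvalue $\tilde\lambda=\lambda_1(\tilde H(v,r))$ (nonnegative, since $\tilde H(v,r)$ has nonnegative weights), push it down to a test vector $f$ on $H(v,r)$, and then invoke the Rayleigh principle on $H(v,r)$.

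\medskip

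\textbf{Construction and key inequality.} Define $f\in\mathbb R^{V(H(v,r))}$ by
\[
f(u)\ :=\ \Bigl(\sum_{x\in\phi^{-1}(u)}\tilde f(x)^2\Bigr)^{1/2}.
\]
Summing over $u$ immediately gives $\langle f,f\rangle=\langle \tilde f,\tilde f\rangle$, so it suffices to show $\langle f,A_{H(v,r)}f\rangle\geq\langle\tilde f,A_{\tilde H(v,r)}\tilde f\rangle$. Expanding as a sum over ordered adjacent pairs and grouping edges of $\tilde H(v,r)$ by their image under $\phi$,
\[
\langle\tilde f,A_{\tilde H(v,r)}\tilde f\rangle=\sum_{(u,u'):uu'\in E(H(v,r))} w_{uu'}\sum_{\substack{(x,x'):\,xx'\in E(\tilde H(v,r))\\ \phi(x)=u,\ \phi(x')=u'}}\tilde f(x)\tilde f(x').
\]

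\medskip

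\textbf{Local injectivity of $\phi$ and Cauchy--Schwarz.} The crucial structural fact is that, for a fixed ordered pair $(u,u')$ with $uu'\in E(H(v,r))$, each $x=(v_0,\dots,v_i)\in\phi^{-1}(u)$ has at most one neighbor $x'$ in $\tilde H(v,r)$ with $\phi(x')=u'$: if $v_{i-1}=u'$ then $x'$ is the parent $(v_0,\dots,v_{i-1})$, otherwise (and only if $i<r$) it is the child $(v_0,\dots,v_i,u')$. Hence the inner sum runs over distinct $x$'s on the $u$-side and distinct $x'$'s on the $u'$-side, and the Cauchy--Schwarz inequality applied to these two finite sequences yields
\[
\sum_{\substack{(x,x'):\,xx'\in E(\tilde H(v,r))\\ \phi(x)=u,\ \phi(x')=u'}}\tilde f(x)\tilde f(x')\ \leq\ \Bigl(\sum_{x\in\phi^{-1}(u)}\tilde f(x)^2\Bigr)^{1/2}\Bigl(\sum_{x'\in\phi^{-1}(u')}\tilde f(x')^2\Bigr)^{1/2}=f(u)f(u').
\]
Plugging back in and using that $\phi$ preserves edge weights gives $\langle\tilde f,A_{\tilde H(v,r)}\tilde f\rangle\leq\langle f,A_{H(v,r)}f\rangle$. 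Combined with $\langle f,f\rangle=\langle\tilde f,\tilde f\rangle$ and the Rayleigh principle $\lambda_1(H(v,r))\geq\langle f,A_{H(v,r)}f\rangle/\langle f,f\rangle$, this finishes the proof.

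\medskip

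\textbf{Main obstacle.} The only nontrivial point is the local-injectivity observation above, which is precisely what makes $\tilde H(v,r)$ a piece of the universal cover: from each vertex $x\in V(\tilde H(v,r))$ the map $\phi$ restricted to the neighborhood of $x$ is one-to-one onto a subset of the neighbors of $\phi(x)$ in $H(v,r)$. Once this is in place, the rest is a clean Cauchy--Schwarz argument; the boundary vertices at distance $r$ (where the covering map fails to be locally surjective) only cause the inequality to be slack, never to fail.
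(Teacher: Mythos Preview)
The paper does not actually prove this lemma; it merely quotes it as \cite[Lemma~4.2]{wang2022weighted}, so there is no in-paper argument to compare against. Your proof is correct and is precisely the standard covering-graph argument one expects here: push the Perron eigenvector of $\tilde H(v,r)$ down to $H(v,r)$ via the $\ell^2$-aggregation $f(u)=\bigl(\sum_{x\in\phi^{-1}(u)}\tilde f(x)^2\bigr)^{1/2}$, observe that $\|f\|=\|\tilde f\|$, and use local injectivity of the covering map together with Cauchy--Schwarz (and the nonnegativity of $\tilde f$) to compare the quadratic forms edge-by-edge. The only place one has to be careful---the partial-matching structure of the fiberwise edge set over each $uu'\in E(H(v,r))$---you handle correctly; both the ``each $x$ has at most one lift of $u'$ as a neighbor'' and the symmetric statement are needed, and both follow from the tree structure of $\tilde H(v,r)$ exactly as you note. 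This is, in essence, the same argument Wang and Zhang give.
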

Suppose that $G$ has a vertex of degree 1, that is, incident to one edge. Then there is a connected component of $G$ that is a path of length 1 with weight of its only edge equal to $w$. Clearly, this component has eigenvalue $w$, which is large enough to finish the proof in this case. Therefore, we can assume that there are no vertices of degree 1 in $G$. 

Lemma~\ref{lemma connection between H and tilde H} and Corollary~\ref{corollary weak form for w-regular graphs} yield that there exists a vertex $v\in G$ such that
\[
\lambda_1(G(v,r))\geq \lambda_1(\tilde{G}(v,r)) \geq  \lambda_1(P_{r+1})\frac{w\sqrt{d-1}}{d}.
\]
Denote by $f_1\in \mathbb R^{V(G)}$ the vector that coincides on $V(G(v,r))$ with the eigenvector of the spectral radius of $G(v,r)$ and is zero on $V(G)\setminus V(G(v,r))$. By the Rayleigh principle, we have 
\[
\lambda_1(G(v,r))=\frac{\langle f_1, A_G f_1\rangle}{\langle f_1,f_1\rangle}.
\]

Let $G'$ be an induced subgraph of $G\setminus G(v,r+1)$ with minimum weighted degree at least $2w\sqrt{d-1}/{d}$. Define a vector $f_2\in \mathbb R^{V(G)}$  by setting
\[
f_2(x)=\begin{cases}
1&\text{ if $x\in V(G')$};\\
0&\text{ otherwise}.
\end{cases}
\]
Hence by the Rayleigh principle, we obtain
\[
\lambda_1(G')\geq \frac{\langle f_2, A_{G} f_2\rangle}{\langle f_2,f_2\rangle} \geq 2w \frac{\sqrt{d-1}}{d}.
\]

One can choose scalars $c_1$ and $c_2$ such that the vector $f=c_1 f_1 +c_2 f_2\ne 0$ is perpendicular to the eigenvector $(1,\dots, 1)$ of the spectral radius  $\lambda_1(G)=w$. Therefore, by the Rayleigh principle, we obtain
\[
\lambda_2(G) \geq \frac{\langle f, A_Gf \rangle}{\langle f, f \rangle} = \frac{c_1^2 \langle f_1, A_G f_1 \rangle + c_2^2\langle f_2, A_Gf_2 \rangle}{c_1^2\langle f_1, f_1 \rangle + c_2^2\langle f_2, f_2 \rangle} \geq\lambda_1(P_{r+1})\frac{w\sqrt{d-1}}{d},
\]
which finishes the proof.\hfill $\square$

\bibliographystyle{abbrv}
\bibliography{bib.bib}
\end{document}